\documentclass[11pt]{article}

\usepackage[a4paper,margin=2.5cm]{geometry}
\usepackage[T1]{fontenc}
\usepackage[utf8]{inputenc}
\usepackage{lmodern}
\usepackage{amsmath,amssymb,amsthm}
\usepackage{graphicx}
\usepackage{booktabs}
\usepackage{tikz}
\usetikzlibrary{positioning}
\usepackage[sort&compress,numbers]{natbib}
\usepackage[hidelinks]{hyperref}
\usepackage{microtype}

\numberwithin{equation}{section}
\newtheorem{definition}{Definition}[section]
\newtheorem{proposition}[definition]{Proposition}
\newtheorem{theorem}[definition]{Theorem}
\newtheorem{lemma}[definition]{Lemma}

\newtheorem{remark}[definition]{Remark}

\title{A Structural Theory of Admissible Transitions in Biological Reaction Networks}
\author{Stephan Peter$^{1}$ and Bashar Ibrahim$^{2,3,*}$\\[0.5em]
\small $^{1}$Department of Basic Sciences, Ernst-Abbe University of Applied Sciences Jena,\\
\small 07745 Jena, Germany\\
\small $^{2}$Department of Mathematics and Computer Science, Friedrich Schiller University Jena,\\
\small 07743 Jena, Germany\\
\small $^{3}$European Virus Bioinformatics Center, 07743 Jena, Germany\\
\small $^{*}$Corresponding author: stephan.peter@eah-jena.de}
\date{}

\begin{document}
\maketitle

\begin{abstract}
Biological reaction networks often exhibit complex transient behavior that cannot be explained solely by the analysis of steady states or long-term persistence. Existing structural approaches identify persistent system properties and have considered transitions between organizations, but do not provide a general
criterion for admissible transitions between arbitrary species configurations. We introduce admissible transitions, a new structural concept that describes feasible changes between species subsets using only reaction network structure and feasible reaction fluxes, independently of kinetic parameters. We prove that solutions of reaction-based ordinary differential equation systems induce canonical sequences of admissible transitions with a fundamental asymmetry: closure-building transitions are uniquely determined by network structure, whereas downward transitions are generally non-unique and depend on the realized trajectory. This establishes a structural layer linking network topology to transient system evolution. The framework is illustrated using a classical HIV immune-response model. By extending structural reaction network analysis from persistent states to transient dynamics, the proposed theory provides a general, kinetics-independent framework for analyzing reachability, organization formation, and transient behavior in biological systems.
\end{abstract}

\noindent\textbf{Keywords:} Admissible transitions; Reaction networks; Systems biology; Chemical organization theory; Transient dynamics; Network structure

\section{Introduction}
\label{sec:introduction}
Complex systems governed by interacting components often exhibit rich 
nonlinear dynamics with nontrivial transient behavior. While significant 
progress has been made in understanding asymptotic states and stability, 
the structural pathways through which systems evolve between different 
configurations remain less well understood. In particular, characterizing 
which transitions between system states are intrinsically permitted by 
network structure, independently of specific kinetic realizations, is a 
fundamental challenge in the study of dynamical systems. Reaction networks 
constitute a central modeling framework for such systems in chemistry, systems 
biology, ecology, and related fields. They support deterministic ordinary 
differential equations (ODEs), stochastic Markov processes, and other 
dynamical descriptions of interacting species. Classical approaches to 
reaction network analysis have mainly focused on asymptotic or steady--state 
behavior. Chemical reaction network theory (CRNT) relates network topology 
to qualitative properties of equilibria under specific kinetic assumptions 
\cite{Feinberg1979lectures,Feinberg1987deficiency,feinberg1995existence}, 
while chemical organization theory (COT) characterizes structurally stable 
sets of species independently of kinetic parameters 
\cite{dittrich2007chemical,peter2021linking,peter2023persistent}. 
Organizations were further shown to provide structural descriptors of persistent
species sets under broad dynamical assumptions \cite{peter2021linking,ibrahim2023persistent}. However, these approaches primarily address asymptotic behavior and transitions between asymptotic states resp. organizations, rather than transient structural transitions between arbitrary species configurations. In particular, they do not characterize which transitions between species
subsets are structurally admissible independently of kinetic laws or detailed
dynamical realizations. Related approaches such as elementary flux mode analysis
\cite{schuster1994elementary,klamt2003two} and Petri net methods
\cite{reisig2013understanding,murata1989petri} focus on steady--state fluxes,
reachability, or concentration-based state spaces rather than on abstract transitions
between species presence sets. Likewise, transient dynamics have been studied in
the context of bifurcation theory and invariant sets
\cite{hirsch2013differential,smith1995monotone}, but not through a general
structural transition framework.

In this work, we introduce a theory of \emph{admissible transitions} between subsets
of species. Transitions are defined independently of concentrations and kinetic laws
and are characterized structurally through stoichiometry and feasible reaction fluxes.
This separates structural constraints from the particular dynamical realization.

We show that solutions of reaction--based ODE systems induce canonical sequences
of feasibly admissible transitions exhibiting a characteristic asymmetry: upward
closure--building transitions are uniquely determined by network structure, whereas downward transitions associated with loss of reaction support are
generally non--unique and path--dependent. In this way, organizations emerge as
terminal nodes within a broader transition landscape rather than as isolated
descriptors of persistence.

The framework is illustrated using an HIV--immune system model, where identical
closure--building behavior can lead either to viral control or immune collapse
depending on the realized admissible downward transition. This demonstrates how
transient structural pathways may influence long--term biological outcomes.


The main contributions of this work are:
\begin{itemize}
\item a structural definition of admissible and feasibly admissible transitions between species subsets;
\item the identification of canonical transition sequences induced by solutions of reaction--based ODE systems;
\item the proof of a fundamental asymmetry between structurally determined upward transitions and path--dependent downward transitions;
\item the integration of organizations into a transition framework extending structural analysis beyond asymptotic behavior.
\end{itemize}

This perspective provides a bridge between continuous nonlinear dynamics and discrete structural transitions in network-driven systems.


The remainder of the paper is organized as follows.
Section~2 introduces the reaction-network background, notation, feasible
reaction fluxes, and the relation between organizations and persistence.
Section~3 develops the theory of admissible transitions, establishes their
basic structural properties, and connects the discrete transition
framework to reaction-based ODE systems.
Section~4 illustrates the framework using the Wodarz--Nowak HIV--immune
system model and discusses its relation to stochastic Markov descriptions.
Section~5 places the proposed approach in the context of related structural
methods and discusses its biological, computational, and mathematical
implications, limitations, and possible extensions.
Finally, Section~6 summarizes the main conclusions and outlines the role
of admissible transitions as a structural description of transient
dynamics in biological reaction networks.


\section{Methods}
\label{sec:preliminaries}

This section recalls only those structural notions from reaction network theory
that are required for the development of the transition framework in Section~\ref{sec:results}.
All definitions are purely structural and independent of any particular choice
of kinetics or dynamical realization.
Background on reaction network theory and chemical organization theory can be found,
for instance, in
\cite{Feinberg1979lectures,Feinberg1987deficiency,dittrich2007chemical,peter2021linking}.

\subsection{Reaction networks and stoichiometry}
\label{subsec:reaction-networks}

A reaction network consists of a finite set of species
\[
S=\{s_1,\dots,s_n\}
\]
and a finite set of reactions
\[
R=\{r_1,\dots,r_m\}.
\]
Each reaction $r_j$ is represented by its stoichiometric input and output vectors,
and the network is encoded by its stoichiometric matrix
\[
N\in\mathbb{Z}^{n\times m}.
\]
The entry $N_{ij}$ denotes the net change of species $s_i$ caused by reaction $r_j$.
This representation is standard in chemical reaction network theory
\cite{Feinberg1979lectures,Feinberg1987deficiency}.

Throughout this work, the term reaction--based dynamical system refers to any
dynamical system whose evolution is induced by a fixed reaction network via
reaction fluxes.
No assumptions on kinetic laws are made unless explicitly stated.

\subsection{Presence sets and abstraction}
\label{subsec:presence-sets}

For a concentration vector $x\in\mathbb{R}_{\ge 0}^n$, we define its presence set by
\begin{equation}
\label{eq:presence-set}
\varphi(x)\;:=\;\{\, s_i\in S \mid x_i>0 \,\}.
\end{equation}

This abstraction associates to each continuous state the set of species that are
present at positive concentration.
It is introduced solely to relate continuous concentration dynamics to set--valued
presence patterns in reaction--based ODE systems.
It plays no role in the purely structural definitions of transitions introduced later.

\subsection{Feasible reaction fluxes}
\label{subsec:feasible-fluxes}

Reaction network structure constrains which reaction fluxes are compatible with a
given presence set.

\begin{definition}[Feasible flux]
\label{def:feasible-flux}
Let $A\subseteq S$.
A vector $\hat v\in\mathbb{R}^m_{\ge 0}$ is called a feasible flux with respect to $A$
if every reaction $r_j$ with $\hat v_j>0$ has all its reactant species contained in $A$.
\end{definition}

Feasibility is a purely structural notion and does not imply that the corresponding
flux is dynamically realized by a particular kinetic law.
It captures which reactions are structurally enabled by a given presence set and is
closely related to support-based notions used in chemical organization theory
\cite{dittrich2007chemical,peter2021linking}.

\subsection{Organizations and self--maintenance}
\label{subsec:organizations}

Organizations provide structurally distinguished reference sets for persistent behavior
in reaction networks.

\begin{definition}[Organization]
A subset $O\subseteq S$ is called an organization if it is
\begin{enumerate}
\item closed, i.e.\ no reaction produces a species outside $O$ when restricted to $O$, and
\item self--maintaining, i.e.\ there exists a nonnegative flux vector $v\ge 0$ such that
$(Nv)_i\ge 0$ for all $s_i\in O$.
\end{enumerate}
\end{definition}

Organizations were introduced as structural descriptors of persistent behavior in
reaction networks in \cite{dittrich2007chemical} and further developed in
\cite{peter2021linking}.
They are invariant under the reaction structure and serve as canonical structural
reference points for terminal behavior.
In the following, organizations are used exclusively as reference points for zero
transitions and terminal configurations.

\subsection{Organizations and persistence}
\label{subsec:persistence}

Chemical organization theory (COT) characterizes structurally stable subsets
of species in reaction networks through the notions of closure and
self--maintenance \cite{dittrich2007chemical}. Organizations were later
shown to be closely related to persistence properties of reaction--based
dynamical systems, including ordinary differential equations and
reaction--diffusion systems \cite{peter2021linking}.

We briefly recall two results from \cite{peter2021linking} that
provide the dynamical foundation for the transition framework developed
later in this work.

First, persistent species sets in the sense of Definition~3.5~\cite{peter2021linking} are necessarily organizations of the
underlying reaction network.

\begin{theorem}[Persistent species sets are organizations {\cite[Theorem~3.25]{peter2021linking}}]
Let $(S,R)$ be a reaction network and let $c$ be a solution of a dynamical system with underlying reaction network $(S,R)$.
Then the set
\[
P(c)=\{s\in S \mid s \text{ is persistent with respect to } c\}
\]
is an organization of $(S,R)$.
\label{th1}
\end{theorem}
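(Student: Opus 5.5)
We have to show that $P(c)$ is closed and self-maintaining. The precise notion of persistence lives in Definition~3.5 of \cite{peter2021linking}. We take it to mean: $s$ is persistent with respect to $c$ if $\liminf_{t\to\infty} c_s(t)>0$, and every non-persistent species has $\liminf_{t\to\infty} c_s(t)=0$. We also assume the standard requirements on a dynamical system with underlying network $(S,R)$. Solutions are nonnegative and bounded. They satisfy $\dot c = N v(c)$ with a flux $v(c)\ge 0$. The flux $v_j$ vanishes when a reactant of $r_j$ is absent, and $v_j$ is bounded below by a positive constant when all reactants of $r_j$ are bounded away from zero. The plan is to prove the two properties separately, closure by a contradiction argument and self-maintenance by a time-averaging argument.

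\textbf{Closure.} Take a reaction $r_j$ whose reactants all lie in $P(c)$, and suppose it produces a species $s\notin P(c)$. Since the reactants have positive $\liminf$, there is a $T$ such that for $t\ge T$ every reactant concentration exceeds some $\varepsilon>0$. By the lower-bound assumption on the kinetics, $v_j(c(t))\ge \delta>0$ for $t\ge T$. The equation for $c_s$ has the form $\dot c_s = \sum_k N_{sk}v_k$. The production term is at least $N_{sj}^{+}\delta$. The consumption terms come from reactions that have $s$ as a reactant, and under the standard assumption these are bounded by $C\,c_s$. Hence $\dot c_s \ge \eta - C c_s$ with $\eta>0$ for $t\ge T$. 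A comparison argument then gives $\liminf c_s \ge \eta/C>0$, which contradicts $s\notin P(c)$.

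\textbf{Self-maintenance.} This is the step I expect to be the main obstacle, since a flux vector must be extracted from the dynamics. I will use time averages. Define
\[
\bar v(T)=\frac{1}{T}\int_0^T v(c(t))\,dt .
\]
Boundedness of the solution makes $\bar v(T)$ bounded, so there is a sequence $T_k\to\infty$ along which $\bar v(T_k)\to v^\ast\ge 0$. For every species,
\[
(N\bar v(T))_i=\frac{c_i(T)-c_i(0)}{T}\to 0,
\]
so $Nv^\ast=0$, and in particular $(Nv^\ast)_i\ge 0$ for $s_i\in P(c)$. Two further points remain. First, $v^\ast$ must be nontrivial on $P(c)$ where this is required, and the reactions with $v^\ast_j>0$ should be supported in $P(c)$. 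For this I will use the same kinetic lower bound as in the closure step, now to show that reactions supported in $P(c)$ have average flux at least $\delta$. Second, reactions with a reactant outside $P(c)$ may still carry positive average flux, because the $\liminf$ condition allows recurrent excursions. In that case I will argue instead by restriction. I set to zero all components of $v^\ast$ for reactions not supported in $P(c)$, and check that the inequality $(Nv)_i\ge 0$ still holds for $s_i\in P(c)$. If this check fails, I will fall back on the exact formulation of persistence and the kinetic hypotheses in \cite{peter2021linking}. In practice one may simply cite Theorem~3.25 there, whose proof follows this averaging scheme.
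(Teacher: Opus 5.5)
The paper gives no proof of this statement; it only quotes Theorem~3.25 of \cite{peter2021linking}. Judged on its own, your closure step is sound under the hypotheses you list. The bound $v_k(c)\le C\,c_s$ for reactions consuming $s$ follows from local Lipschitz continuity, the reactant-availability condition and boundedness of the orbit. The self-maintenance step, however, has a genuine gap, exactly at the restriction you flag. The vector $v^\ast$ satisfies $Nv^\ast=0$, but it may charge reactions that have a reactant outside $P(c)$, and deleting those components can destroy $(Nv)_i\ge 0$ on $P(c)$.

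No better ``check'' can repair this. Under your reading of persistence, namely only $\liminf_{t\to\infty}c_s(t)>0$, the statement is false for the standard COT notion of self-maintenance, which demands $v_j>0$ exactly on the reactions supported by $O$. (Your extra clause that non-persistent species have $\liminf c_s=0$ is vacuous; it is just the negation.) Here is a counterexample:
\begin{itemize}
\item Take the May--Leonard competition system on $x_1,x_2,x_3$, realized by the mass-action reactions $x_i\to 2x_i$, $2x_i\to x_i$, $x_i+x_j\to x_j$, with $0<\alpha<1<\beta$ and $\alpha+\beta>2$.
\item Its generic interior orbits converge to a boundary heteroclinic cycle, so $\liminf x_i=0<\limsup x_i$ for every $i$.
\item Add the reactions $x_i\to x_i+a$ and $a\to\emptyset$.
\item Since $x_1+x_2+x_3$ stays bounded away from $0$ near the cycle, $\liminf a>0$, hence $P(c)=\{a\}$.
\item This set is closed, but the only reaction it supports is $a\to\emptyset$, so it is not self-maintaining.
\end{itemize}
All of your kinetic hypotheses hold in this example. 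Falling back on the cited paper is therefore not a proof step but a change of hypotheses. Note also that with the definition of self-maintenance as literally printed in Section~\ref{subsec:organizations}, $v=0$ is always a witness, so only closure would have content; you are right to aim at the standard notion.

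What is missing is a hypothesis controlling the non-persistent species, typically that each species either persists or vanishes, $\lim_{t\to\infty}c_s(t)=0$. Some condition of this kind must be part of the precise setting of the cited theorem, and you should state and use it rather than keep it as a fallback. With it, your averaging scheme closes cleanly:
\begin{itemize}
\item If $r_j$ has a reactant $s$ with $c_s(t)\to 0$, then $v_j$ is uniformly continuous on a box containing the orbit and $v_j=0$ on $\{c_s=0\}$. Hence $v_j(c(t))\to 0$, and so $v^\ast_j=0$.
\item If all reactants of $r_j$ lie in $P(c)$, your kinetic lower bound gives $v^\ast_j\ge\delta>0$.
\end{itemize}
So $v^\ast$ has exactly the required support, and $Nv^\ast=0$ gives self-maintenance. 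The same vector also yields closure without the comparison argument. If a reaction supported by $P(c)$ produced some $s\notin P(c)$, every reaction consuming $s$ would have $v^\ast=0$. Then $(Nv^\ast)_s>0$, contradicting $Nv^\ast=0$.
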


Thus, organizations provide canonical structural descriptors of asymptotic
persistent behavior.

Second, closure building occurs immediately once reactions producing new
species become structurally enabled.

\begin{lemma}[Instant appearance of closure {\cite[Lemma~3.21]{peter2021linking}}]
Let $c$ be a solution of a dynamical system with underlying
reaction network $(S,R)$. Then every species contained in the closure
$\mathrm{clos}(\phi(c(t)))$ of the current presence set appears immediately.
More precisely, if
\[
s_i \in \mathrm{clos}(\phi(c(t))),
\]
then for every $\varepsilon>0$ there exists a time
$t' \in (t,t+\varepsilon)$ such that
\[
c_i(t')>0.
\]
\label{l1}
\end{lemma}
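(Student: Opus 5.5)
The plan is to argue by induction along the closure iteration, using only nonnegativity of solutions and the reaction-based form of the dynamics. Write $A=\phi(c(t))$ and recall that $\mathrm{clos}(A)$ is the limit of the increasing chain $A=A_0\subseteq A_1\subseteq\dots\subseteq A_K=\mathrm{clos}(A)$. Here $A_{k+1}$ is $A_k$ together with all products of reactions whose reactants lie in $A_k$. The chain stabilises after finitely many steps because $S$ is finite. The claim to prove by induction on $k$ is the following: for every $s_i\in A_k$ and every $\varepsilon>0$ there is an interval $(t_1,t_2)\subseteq(t,t+\varepsilon)$ on which $c_i>0$. This is slightly stronger than the statement and is the form that propagates.

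For the base case $k=0$ I would use that a species present at time $t$ stays positive on a short interval $[t,t+\delta)$, by continuity of $c$. Intersecting finitely many such intervals gives a common interval $[t,t+\delta_0)$ on which every species of $A$ is positive. This is the key simplification: continuity keeps all of $A$ present near $t$, so positivity is never lost in the short run.

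For the inductive step, suppose every species of $A_k$ is positive on some common subinterval $I\subseteq(t,t+\varepsilon)$. Common intervals again come from finitely many intersections, combined with continuity at an interior point where all of them are positive. Now take $s_i\in A_{k+1}\setminus A_k$, produced by a reaction $r_j$ whose reactants lie in $A_k$. On $I$ the flux $v_j(c(\tau))$ is strictly positive; this is the standing assumption of a reaction-based system in \cite{peter2021linking}, that a reaction fires whenever its reactants are present. Then $\dot c_i=\sum_l N_{il}v_l(c)$ splits into production terms, which are $\ge v_j>0$ times the stoichiometric coefficient, and consumption terms. For the consumption terms I would use the standard structure that each consuming reaction has $s_i$ as a reactant and therefore vanishes when $c_i=0$.

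Suppose for contradiction that $c_i\equiv 0$ on $I$. Then $\dot c_i\equiv 0$ on $I$, but the consumption terms vanish and the production terms are strictly positive, so $\dot c_i>0$, which is a contradiction. Hence $c_i(t')>0$ for some $t'\in I$, and by continuity $c_i>0$ on a neighbourhood of $t'$ inside $I$. This completes the induction, and taking $k=K$ yields the lemma.

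The main obstacle is the inductive step, in two places. First, I must make precise the kinetic hypotheses on $v$ from \cite{peter2021linking}: that the flux is positive when the reactants are present, and that a species' consumption vanishes when the species is absent. Second, $c_i$ is only known to vanish at the single point $t$, not on an interval. I would handle this by the contradiction argument above on the whole interval $I$, rather than by a pointwise derivative computation at $t$.
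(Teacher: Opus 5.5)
Your proof is correct in substance. The paper gives no proof of its own here; it imports the lemma from \cite{peter2021linking}, so your argument has to stand on its own.

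One step needs tightening. Your inductive claim is stated species by species, but the step uses a \emph{common} interval on which all of $A_k$ is positive. Intersecting per-species intervals may give the empty set, so ``finitely many intersections'' does not produce the interior point you need. The fix is as follows:
\begin{itemize}
\item Formulate the claim for $A_k$ as a whole.
\item Add the new species $s_{i_1},\dots,s_{i_p}\in A_{k+1}\setminus A_k$ one at a time. Your contradiction argument works on any open subinterval on which $A_k$ is positive.
\item This yields nested intervals $I\supseteq J_1\supseteq\dots\supseteq J_p$ with $c_{i_q}>0$ on $J_q$, and $J_p$ serves for $A_{k+1}$.
\end{itemize}
Alternatively, local Lipschitz continuity plus reactant availability gives $v_l(c)\le L\,c_i$ for every reaction $r_l$ consuming $s_i$. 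Hence $\dot c_i\ge -Kc_i$ along the trajectory, a species once present stays present, and common intervals are automatic.

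Also state explicitly why the production coefficient is positive. Since $s_i\notin A_k$ while all reactants of $r_j$ lie in $A_k$, $s_i$ is not a reactant of $r_j$, so $N_{ij}>0$.

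You are right that the argument needs the converse kinetic hypothesis: $\operatorname{React}(r_j)\subseteq\phi(c)$ implies $v_j(c)>0$. The present paper states only the one-directional reactant-availability condition, and under that alone the lemma fails. For example, take $\emptyset\to x$ with $v\equiv 0$ and $c(0)=0$: then $x$ lies in the closure but never appears. The vanishing of the consumption terms at $c_i=0$, by contrast, already follows from the stated condition. So only the positivity hypothesis has to be imported from \cite{peter2021linking}.
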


This result implies that closure-building transitions are structurally
canonical and dynamically unavoidable once the corresponding reactions are
enabled.

Together, these results establish a characteristic asymmetry:
upward closure-building behavior is canonically induced by network structure,
whereas the eventual terminal organization may depend on the realized
dynamical trajectory. However, the cited results do not describe the
intermediate structural transitions between species subsets. This motivates
the transition framework introduced in the next section, which captures
structurally admissible changes between species subsets independently of
specific kinetic realizations.

\section{Results}
\label{sec:results}

\subsection{Structural transitions between species sets}
\label{subsec:transitions-sets}

To formalize structural changes in network-driven dynamical systems, we introduce
transitions as purely set-theoretic objects on the power set of species.
No kinetic assumptions, time parametrization, or dynamical realizability are imposed;
these aspects enter only later when we restrict to structurally admissible transitions
induced by a reaction network.

\begin{definition}[Directed transition]
\label{def:directed-transition}
Let $S$ be a finite set and let $A,B\subseteq S$. A (directed) transition from $A$ to $B$ is the ordered pair
$T=(A,B)$, written $A\to B$. We call $A$ the source set and $B$ the target set of $T$.
Two transitions $(A,B)$ and $(A',B')$ are equal if $A=A'$ and $B=B'$.
\end{definition}

\begin{definition}[Binary vector representation]
\label{def:binary-representation}
Fix an ordering $S=\{s_1,\dots,s_n\}$. For any subset $A\subseteq S$ we define its indicator vector
$\vec A\in\{0,1\}^n$ by
\[
(\vec A)_i :=
\begin{cases}
1, & s_i\in A,\\
0, & s_i\notin A,
\end{cases}
\qquad i=1,\dots,n .
\]
\end{definition}

\begin{definition}[Partial composition of transitions]
\label{def:composition}
Let $A,B,C\subseteq S$. For transitions $T_{AB}=(A,B)$ and $T_{BC}=(B,C)$ we define their composition by
\[
T_{AB}\circ T_{BC} := (A,C).
\]
Composition is defined only when the target of the first transition equals the source of the second.
Accordingly, any finite sequence $A_0\to A_1\to\cdots\to A_k$ is understood to satisfy $A_{i+1}$ being the
target of the $i$-th transition and the source of the $(i+1)$-st transition.
\end{definition}

\begin{remark}[Set-theoretic transitions vs.\ network-induced transitions]
\label{rem:set-theoretic-vs-admissible}
Definitions~\ref{def:directed-transition}--\ref{def:composition} describe transitions purely on $\mathcal P(S)$.
In later subsections we will restrict attention to those transitions that are induced by a reaction network
in the sense of structural admissibility. The partial composition above is compatible with concatenating
such admissible transitions, but admissibility itself is not automatic and is not symmetric in general.
\end{remark}

\subsection{Difference vectors and transition classes}
\label{subsec:difference-vectors}

Transitions between species sets encode discrete structural changes in the
state space of the underlying dynamical system. To represent these changes
algebraically, we associate each transition with a ternary difference vector
derived from the indicator representation introduced in Definition~\ref{def:binary-representation}.

\begin{definition}[Difference vector]
\label{def:difference-vector}
Let $S=\{s_1,\dots,s_n\}$ be a finite set and let $A,B\subseteq S$.
The difference vector associated with the transition $T=(A,B)$ is defined as
\[
\Delta(T):=\vec B-\vec A \in \{-1,0,+1\}^n,
\]
where $\vec A,\vec B\in\{0,1\}^n$ denote the indicator vectors of $A$ and $B$,
respectively.
\end{definition}

Each component $\Delta(T)_i$ specifies whether species $s_i$ disappears ($-1$),
appears ($+1$), or remains unchanged ($0$) along the transition.

\begin{definition}[Support sets of a transition]
\label{def:support-sets}
Let $T=(A,B)$ be a transition with difference vector $\Delta(T)$.
We define
\[
P(T):=\{\,i\mid \Delta(T)_i=+1\,\},
\qquad
N(T):=\{\,i\mid \Delta(T)_i=-1\,\}.
\]
Equivalently,
\[
P(T)=B\setminus A,
\qquad
N(T)=A\setminus B.
\]
\end{definition}

The sets $P(T)$ and $N(T)$ describe which species are structurally added or removed
by the transition.

\begin{definition}[Transition classes]
\label{def:transition-classes}
Let $T=(A,B)$ be a transition.
\begin{enumerate}
\item
$T$ is called a \emph{zero transition} if $\Delta(T)=0$, equivalently $A=B$.
Otherwise, $T$ is called \emph{non-zero}.

\item
$T$ is called \emph{non-positive} if
\[
\Delta(T)\in\{-1,0\}^n,
\]
equivalently if $P(T)=\emptyset$ or $B\subseteq A$.

\item
$T$ is called \emph{non-negative} if
\[
\Delta(T)\in\{0,+1\}^n,
\]
equivalently if $N(T)=\emptyset$ or $A\subseteq B$.

\item
$T$ is called \emph{plus-containing} if $P(T)\neq\emptyset$ and
\emph{minus-containing} if $N(T)\neq\emptyset$.
\end{enumerate}
\end{definition}

All notions introduced in this subsection depend only on set-theoretic presence
and absence of species. No reaction network, stoichiometric, or dynamical assumptions
are involved at this stage.

The transition classes defined above are purely combinatorial and apply to arbitrary
finite sets. We now specialize to reaction networks by introducing structurally
admissible transitions compatible with stoichiometric constraints and reaction support.

\subsection{Structurally admissible transitions in reaction networks}
\label{subsec:admissible-transitions}

For reaction networks, transitions between subsets of species are constrained 
by the underlying interaction structure. From the perspective of dynamical systems, 
these constraints determine which discrete changes in the system’s state space are 
structurally permissible. In this subsection, we derive criteria that characterize 
such admissible transitions based solely on stoichiometry and reaction support.

A key structural quantity is the stoichiometric effect vector $N v$, where $N$ denotes the
stoichiometric matrix of the reaction network and $v\in\mathbb{R}^m_{\ge 0}$ is a nonnegative
reaction flux vector. Since reaction support matters for realizability from a given source set,
we distinguish two layers of admissibility:

\begin{itemize}
\item Stoichiometric admissibility, which only requires the existence of some
nonnegative flux vector $v\ge 0$ that matches the sign pattern of a transition.
This layer is purely algebraic and does not encode which reactions are enabled at the source set.

\item Feasible admissibility, which in addition requires that the witnessing flux vector
$\hat v$ is a feasible flux with respect to the source set in the sense of
Definition~\ref{def:feasible-flux}. This layer incorporates reaction support (enabled reactions) and is therefore
inherently source-dependent.
\end{itemize}

In the subsequent subsubsections, we derive relations between admissible transitions and
structural properties of species subsets such as closedness, self-maintenance and organizations.

\subsubsection{Structural admissibility}
\label{subsubsec:structural-admissibility}

We fix a reaction network $(S,R)$ with stoichiometric matrix
$N\in\mathbb Z^{n\times m}$.
Let $T=(A,B)$ be a transition between subsets $A,B\subseteq S$ and let
\[
\delta(T)=\vec B-\vec A\in\{-1,0,+1\}^n
\]
denote its difference vector
(Definition~\ref{def:difference-vector}).

Admissibility is tested only for species not contained in both sets.
We therefore define
\begin{equation}
\label{eq:test-index-set}
J(T):=\{\,i\in\{1,\dots,n\}\mid s_i\notin A\cap B\,\}.
\end{equation}

We use the sign map
\[
\operatorname{sign}:\mathbb R\to\{-1,0,+1\},
\qquad
\operatorname{sign}(0)=0.
\]

\begin{definition}[Stoichiometrically elementary admissible transition]
\label{def:stoich-elementary-adm}
A transition $T=(A,B)$ is called stoichiometrically elementary admissible
if there exists a vector $v\in\mathbb R_{\ge0}^m$ such that
\begin{equation}
\label{eq:stoich-elementary}
\operatorname{sign}\big((Nv)_i\big)=\delta(T)_i
\qquad
\text{for all } i\in J(T).
\end{equation}
\end{definition}

\begin{definition}[Feasibly elementary admissible transition]
\label{def:feasible-elementary-adm}
A transition $T=(A,B)$ is called feasibly elementary admissible
if there exists a feasible flux
$\hat v\in\mathbb R_{\ge0}^m$
with respect to the source set $A$
(Definition~\ref{def:feasible-flux}) such that
\begin{equation}
\label{eq:feasible-elementary}
\operatorname{sign}\big((N\hat v)_i\big)=\delta(T)_i
\qquad
\text{for all } i\in J(T).
\end{equation}
\end{definition}

Admissibility of general transitions is defined via finite compositions of
elementary admissible transitions.

\begin{definition}[Stoichiometrically admissible transition]
\label{def:stoich-adm}
A transition $T=(A,B)$ is called stoichiometrically admissible if there exist
$k\ge1$ and subsets
\[
A=A_0,A_1,\dots,A_k=B
\]
such that each transition $(A_{j-1},A_j)$ is stoichiometrically elementary admissible.
\end{definition}

\begin{definition}[Feasibly admissible transition]
\label{def:feasible-adm}
A transition $T=(A,B)$ is called feasibly admissible if there exist
$k\ge1$ and subsets
\[
A=A_0,A_1,\dots,A_k=B
\]
such that each transition $(A_{j-1},A_j)$ is feasibly elementary admissible.
\end{definition}

\begin{proposition}[Closure under composition]
\label{prop:composition-admissibility}
Let $A,B,C\subseteq S$.
If the transitions $A\to B$ and $B\to C$ are feasibly admissible, then the
composed transition $A\to C$ is feasibly admissible.
The analogous statement holds for stoichiometrically admissible transitions.
\end{proposition}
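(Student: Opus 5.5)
The argument is a direct concatenation of witnessing chains. By Definition~\ref{def:feasible-adm}, feasible admissibility of $A\to B$ provides some $k\ge 1$ and subsets $A=A_0,A_1,\dots,A_k=B$ such that every step $(A_{j-1},A_j)$ is feasibly elementary admissible. Likewise, feasible admissibility of $B\to C$ provides some $\ell\ge1$ and subsets $B=B_0,B_1,\dots,B_\ell=C$ with every step $(B_{j-1},B_j)$ feasibly elementary admissible.

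We concatenate the two chains:
\[
A=A_0,\dots,A_k=B=B_0,B_1,\dots,B_\ell=C .
\]
This is a chain of length $k+\ell\ge1$. The two chains match at the junction because $A_k=B=B_0$, which is exactly the compatibility condition required by the partial composition in Definition~\ref{def:composition}.

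The key check is that each step of the concatenated chain is still feasibly elementary admissible. Each step is literally one of the original steps, and this property is intrinsic to the pair $(A_{j-1},A_j)$ or $(B_{j-1},B_j)$. Concretely, the witnessing flux $\hat v$ for a step must be feasible only with respect to that step's own source set (Definition~\ref{def:feasible-flux}). The sign condition~\eqref{eq:feasible-elementary} refers only to that step's own index set $J$ and difference vector $\delta$. So no witness has to be altered, and no interaction between steps arises. This is the only place one might worry, since feasibility is source-dependent, but the dependence is step-local. Definition~\ref{def:feasible-adm} therefore yields that $A\to C$ is feasibly admissible.

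The stoichiometric statement follows verbatim by the same argument. Definition~\ref{def:stoich-adm} is used in place of Definition~\ref{def:feasible-adm}, and the witnesses are arbitrary $v\ge0$ satisfying~\eqref{eq:stoich-elementary}. I expect no genuine obstacle here: the proposition is essentially the transitivity built into defining admissibility as finite chains of elementary steps. Note that I would not attempt to show that $A\to C$ is \emph{elementary} admissible, which is false in general.
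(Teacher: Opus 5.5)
Your proof is correct and takes the same route as the paper: both concatenate the two witnessing chains of feasibly (resp.\ stoichiometrically) elementary admissible steps at the common set $A_k=B=B_0$. Your explicit observation that each witness flux needs feasibility only with respect to its own step's source set is a check the paper leaves implicit.
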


\begin{proof}
By Definition~\ref{def:feasible-adm}, the transition $A\to B$ admits a finite
decomposition into feasibly elementary admissible transitions, and so does
$B\to C$. Concatenating these two decompositions gives a finite decomposition
of $A\to C$ into feasibly elementary admissible transitions. Hence $A\to C$
is feasibly admissible. The stoichiometric case is identical.
\end{proof}

Every feasibly admissible transition is stoichiometrically admissible,
whereas the converse need not hold since feasibility depends on reaction
support at the source set. Consequently, feasible admissibility is
generally source-dependent and need not be symmetric under reversal of
transitions.

In the following, the term admissible is understood in the feasible sense
unless explicitly stated otherwise.

We also say that a source set $A$ causes an admissible transition
$T=(A,B)$ whenever $T$ is admissible with source set $A$.

\subsubsection{Relations between admissible transitions and species subsets}
\label{subsubsec:relations-adm-subsets}

We now relate admissible transitions to structural properties such as closedness,
self--maintenance, and organizations. Throughout this subsection, admissibility is
understood in the feasible sense.

\begin{proposition}
\label{cor:organization-zero}
Let $A\subseteq S$.
\begin{enumerate}
\item
If $A$ is an organization, then there exists a feasibly admissible zero transition
$A\to A$.

\item
If $A$ is non-closed, then there exists a feasibly admissible plus-containing
transition $A\to B$.

\item
If $A$ is closed but not self-maintaining, then there exists a feasibly admissible
non-positive transition $A\to B$ with $B\subsetneq A$.
\end{enumerate}
\end{proposition}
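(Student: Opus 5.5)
The plan is to treat each of the three items separately. In each case I construct an explicit witness flux $\hat v$, which is a feasible flux with respect to $A$ in the sense of Definition~\ref{def:feasible-flux}, together with an explicit target set $B$. I then check the sign condition \eqref{eq:feasible-elementary} only on the test index set $J(T)$ from \eqref{eq:test-index-set}. The transition will in each case be feasibly \emph{elementary} admissible, hence feasibly admissible with $k=1$ in Definition~\ref{def:feasible-adm}. One observation is used repeatedly. If $A$ is closed and $\hat v$ is feasible with respect to $A$, then $(N\hat v)_i=0$ for every $s_i\notin A$. The reason is that every reaction in the support of $\hat v$ has all its reactants in $A$; by closedness it produces nothing outside $A$, and it consumes nothing outside $A$.

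For item 1, take $B=A$, so $J(T)=\{i\mid s_i\notin A\}$ and $\delta(T)=0$. The trivial flux $\hat v=0$ is feasible and gives $N\hat v=0$, which already suffices. To exhibit a witness that actually reflects the organization, I would instead take any self-maintaining flux supported on the reactions enabled by $A$. By closedness and the observation above, $(N\hat v)_i=0$ for all $s_i\notin A$, so the sign condition holds on $J(T)$.

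For item 2, non-closedness gives a reaction $r_j$ whose reactants all lie in $A$ and which has some product $s_k\notin A$. I take $\hat v=e_j$, which is feasible with respect to $A$. For $s_i\notin A$, species $s_i$ is not a reactant of $r_j$, so $(N e_j)_i\ge 0$, with strict inequality exactly when $s_i$ is a product. I set $B:=A\cup\{s_i\notin A\mid N_{ij}>0\}$. Then $A\subseteq B$, so $J(T)=\{i\mid s_i\notin A\}$. On this set, $\operatorname{sign}((Ne_j)_i)$ equals $+1$ precisely on $B\setminus A$ and $0$ elsewhere, which matches $\delta(T)$. Since $s_k\in B\setminus A$, the transition is plus-containing.

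For item 3, the step I expect to be the main obstacle is definitional. Read literally, the self-maintenance condition in the definition of an organization is satisfied by $v=0$ for every set. I would therefore use the standard chemical organization theory reading from \cite{dittrich2007chemical,peter2021linking}: there should exist $v$ with $v_j>0$ exactly for the reactions whose reactants lie in $A$, and with $(Nv)_i\ge0$ on $A$. Under this reading, failure of self-maintenance means the following. Take $\hat v$ with $\hat v_j=1$ for every reaction enabled by $A$ and $0$ otherwise; this is a flux of exactly the required support and is feasible. Failure of self-maintenance then forces $(N\hat v)_i<0$ for some $s_i\in A$. I then set $B:=A\setminus\{s_i\in A\mid (N\hat v)_i<0\}\subsetneq A$. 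Now $J(T)$ consists of the indices with $s_i\notin A$ together with those in $A\setminus B$. On the first part, closedness and the observation above give sign $0=\delta(T)_i$. On $A\setminus B$, the sign is $-1=\delta(T)_i$ by construction. Hence $A\to B$ is feasibly elementary admissible and non-positive. If the intended self-maintenance notion allows arbitrary supports on $A$, I would adapt the argument by contraposition: a flux with nonnegative production on $A$ could be perturbed to have full enabled support while keeping nonnegative production, and this is where care is needed.
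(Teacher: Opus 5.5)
The paper states this proposition without proof; it is followed only by an interpretive sentence. There is therefore no argument of the authors to compare with. Your proposal is correct and in effect supplies the missing proof.

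All three witnesses give one-step, i.e.\ feasibly elementary, transitions. The key observation does real work in items 1 and 3: for closed $A$ and $\hat v$ feasible with respect to $A$, one has $(N\hat v)_i=0$ for every $s_i\notin A$. Such an $s_i$ is neither a reactant nor, by closedness, a product of any reaction in the support. This makes the sign condition on the complement of $A$ automatic.

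Your write-up also makes two things explicit that the paper glosses over.
\begin{itemize}
\item Item 1 holds for \emph{every} $A\subseteq S$, because the witness $\hat v=0$ never uses the organization hypothesis. So item 1 does not single out organizations.
\item As you noticed, with self-maintenance as literally defined in the definition of an organization, every set is self-maintaining via $v=0$. Item 3 is therefore vacuous under the paper's literal wording. It only has content under the standard COT definition, where the flux is strictly positive on exactly the reactions enabled by $A$. Under that definition your all-enabled-reactions indicator flux is the right witness.
\end{itemize}

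In item 2, you could replace $e_j$ by that same indicator flux of all enabled reactions. This gives the elementary transition from $A$ to $A$ together with all products of enabled reactions, i.e.\ the first step of the closure construction behind Proposition~\ref{cor:canonical-closure}. Your single-reaction witness is equally valid.

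The one thing I would delete is the closing fallback. The claim that a flux with nonnegative production on $A$ can be perturbed to full enabled support while keeping nonnegative production is false in general. Take $A=\{a\}$ with the single reaction $a\to\emptyset$. Then $v=0$ has nonnegative production on $A$, but every flux positive on the enabled reaction gives $(Nv)_a<0$. The fallback is also unnecessary: under any definition that admits $v=0$ (arbitrary support), the hypothesis of item 3 is never satisfied, so nothing needs to be adapted.
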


Thus, non-organizations are structurally characterized by the existence of admissible
non-zero transitions, either through admissible expansion or admissible reduction.

The closure construction induces canonical admissible upward transitions.

\begin{proposition}
\label{cor:canonical-closure}
Let $A\subseteq S$ and let $\mathrm{clos}(A)$ denote its closure.
Then there exists a feasibly admissible plus-containing transition
\[
A \to \mathrm{clos}(A).
\]
More generally, all intermediate closure steps
\[
A \to \mathrm{clos}_1(A) \to \cdots \to \mathrm{clos}(A)
\]
are feasibly admissible.
\end{proposition}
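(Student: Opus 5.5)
The plan is to make the iterated closure explicit, show that each single closure step is feasibly elementary admissible by exhibiting a concrete witness flux, and then chain the steps together with Proposition~\ref{prop:composition-admissibility}.

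\textbf{Setting up the closure chain.} For $A\subseteq S$, let $R(A)$ be the set of reactions whose reactant species all lie in $A$. Define
\[
\mathrm{clos}_1(A):=A\cup\{\,s\in S\mid s \text{ is a product of some } r\in R(A)\,\},
\qquad
\mathrm{clos}_{k+1}(A):=\mathrm{clos}_1(\mathrm{clos}_k(A)).
\]
This sequence is increasing. Since $S$ is finite, it stabilizes after finitely many steps, and its limit is $\mathrm{clos}(A)$, the smallest closed superset of $A$. Write $A_k:=\mathrm{clos}_k(A)$ with $A_0=A$.

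\textbf{One closure step is feasibly elementary admissible.} Fix $k$ and define the witness flux $\hat v$ by $\hat v_j=1$ if $r_j\in R(A_k)$ and $\hat v_j=0$ otherwise. By construction $\hat v$ is a feasible flux with respect to $A_k$. Because $A_k\subseteq A_{k+1}$, the test index set $J$ from \eqref{eq:test-index-set} consists exactly of the species outside $A_k$. The whole step rests on one observation: a species $s_i\notin A_k$ is never a reactant of an enabled reaction, so $N_{ij}\ge 0$ for every $r_j\in R(A_k)$. Two cases then cover all of $J$.
\begin{itemize}
\item If $s_i\in A_{k+1}\setminus A_k$, then some enabled reaction produces $s_i$ with strictly positive net stoichiometry. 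Hence $(N\hat v)_i>0$, which equals $\delta_i=+1$.
\item If $s_i\notin A_{k+1}$, then no enabled reaction produces $s_i$, and $s_i$ is not consumed either. Hence $(N\hat v)_i=0$, which equals $\delta_i=0$.
\end{itemize}
Therefore $(A_k,A_{k+1})$ satisfies \eqref{eq:feasible-elementary}. For $k$ with $A_{k+1}\neq A_k$ this transition is plus-containing. If $A_{k+1}=A_k$, the step is a zero transition and can simply be dropped from the chain.

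\textbf{Composition.} Applying Proposition~\ref{prop:composition-admissibility} repeatedly to the finite chain $A_0\to A_1\to\cdots\to\mathrm{clos}(A)$ shows that every intermediate step is feasibly admissible. It also shows that $A\to\mathrm{clos}(A)$ is feasibly admissible, with difference vector in $\{0,+1\}^n$. This transition is plus-containing exactly when $A$ is non-closed. If $A$ is closed, then $\mathrm{clos}(A)=A$, and the statement should be read as degenerating to a zero transition; I would add a remark making this explicit.

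The only delicate point is the sign argument in the single-step case. Because $N_{ij}$ is a net stoichiometric coefficient, catalysts or species appearing on both sides of a reaction could in principle spoil the positivity on new species or the vanishing outside $A_{k+1}$. The observation that no species outside $A_k$ is a reactant of any enabled reaction rules this out, so no cancellation can occur.
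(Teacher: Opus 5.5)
Your proof is correct and follows the route the paper relies on. The paper gives no separate proof of this proposition; in the proof of Theorem~\ref{thm:ode-induced-transitions} it only asserts that the closure construction decomposes into feasibly elementary admissible steps, composed via Proposition~\ref{prop:composition-admissibility}. Your witness flux (all reactions enabled at $A_k$), together with the observation that species outside $A_k$ are never reactants of enabled reactions, supplies exactly the justification the paper omits.

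Your caveat is also right: if $A$ is closed, then $A\to\mathrm{clos}(A)$ is the zero transition, so the plus-containing claim holds only for non-closed $A$.
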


These results are purely structural and independent of specific dynamical systems.
The asymmetry induced by feasibility is essential: while stoichiometric admissibility
depends only on sign patterns, feasible admissibility additionally depends on which
reactions are enabled at the source set.

In the next subsection, we show that solutions of reaction--based differential
equation systems induce canonical sequences of feasibly admissible transitions,
with the upward closure-building phase uniquely determined by network structure.

\subsection{Transitions induced by reaction--based ODE systems}
\label{subsec:ode-induced-transitions}

We now establish the connection between continuous nonlinear dynamics and the
discrete structural transitions introduced above. Specifically, we relate
admissible transitions to solutions of reaction-based ordinary differential
equation (ODE) systems. This connection shows how trajectories in the continuous
state space induce canonical sequences of discrete structural changes.

We consider initial value problems of the form
\begin{equation}
\label{eq:ode}
\dot c(t)=N\,v(c(t)),
\qquad
c(0)=c_0\in\mathbb R_{\ge0}^n,
\end{equation}
where $N\in\mathbb R^{n\times m}$ is the stoichiometric matrix and
$v:\mathbb R_{\ge0}^n\to\mathbb R_{\ge0}^m$ is a locally Lipschitz reaction
rate function.

We furthermore assume the standard reactant-availability condition that,
for every reaction $r_j$,
\[
v_j(c)>0
\quad\Longrightarrow\quad
\operatorname{React}(r_j)\subseteq \phi(c).
\]

To each concentration vector we associate its presence set
\[
\phi(c):=\{\,s_i\in S\mid c_i>0\,\}.
\]

Changes of $\phi(c(t))$ induce a canonical transition sequence
\[
\phi(c(t_0))
\to
\phi(c(t_1))
\to
\phi(c(t_2))
\to \cdots .
\]

Let
\[
S_0 := \phi(c(0))
\]
denote the initial presence set. By Lemma~2.4, species contained in
$\operatorname{clos}(S_0)$ appear once the corresponding reactions become
enabled, so that the closure-building phase is structurally determined by
the reaction network. Moreover, Theorem~2.3 implies that the persistent
species set $P(c)$ associated with a trajectory is an organization of the
underlying reaction network. We denote this organization by
\[
O := P(c).
\]
It is important to distinguish this asymptotic persistence statement from
finite-time realization of $O$ as a presence set. Accordingly, the
closure-building transition is obtained directly from the network
structure, whereas a transition from $\operatorname{clos}(S_0)$ to a
terminal organization $O$ is considered below whenever both sets are
realized as presence sets along the trajectory.

This canonical decomposition is illustrated schematically in
Figure~\ref{fig:phi_comuting_diagram}.

\begin{figure}[h]
\centering
\begin{tikzpicture}[
  >=latex,
  setnode/.style={draw, rounded corners, align=center, font=\small, inner sep=4pt},
  lab/.style={font=\footnotesize, inner sep=1pt, fill=white},
  arrow/.style={->, thick}
]

\node[setnode] (S0) at (0,0) {$S_0$};
\node[setnode] (C0) at (5,1.2) {$\mathrm{clos}(S_0)$};
\node[setnode] (O) at (10,0) {$O$};

\node[setnode] (c0)   at (0,-2.0) {$c(t_0)$};
\node[setnode] (c1)   at (5,-2.0) {$c(t_1)$};
\node[setnode] (cinf) at (10,-2.0) {$c(t_2)$};

\draw[arrow] (c0)   -- node[lab, pos=0.5, right] {$\phi$} (S0);
\draw[arrow] (c1)   -- node[lab, pos=0.5, right] {$\phi$} (C0);
\draw[arrow] (cinf) -- node[lab, pos=0.5, right] {$\phi$} (O);

\draw[arrow] (c0)   -- node[lab, pos=0.5] {$+N\cdot \int_{t_0}^{t_1} v(c(t))\,dt$} (c1);
\draw[arrow] (c1)   -- node[lab, pos=0.5] {$+N\cdot\int_{t_1}^{t_\infty}v(c(t))\,dt$} (cinf);

\draw[arrow, style=dashed] (S0) -- node[lab, pos=0.5] {$+\Delta(S_0,clos(S_0))$} (C0);
\draw[arrow] (C0) -- node[lab, pos=0.5] {$+\Delta(clos(S_0),O)$} (O);

\node[lab] (T0) at (0,2) {$\mathbf{t_0=0}$};
\node[lab] (T1) at (5,2) {$\mathbf{0<t_1<\infty}$};
\node[lab] (Tinf) at (10,2) {$\mathbf{t_2\to\infty}$};

\end{tikzpicture}
\caption{Transition structure associated with reaction-based ODE systems.
The closure-building transition from $S_0$ to $\operatorname{clos}(S_0)$
is structurally determined. If a terminal organization $O$ is realized as
a presence set along the trajectory, the subsequent transition from
$\operatorname{clos}(S_0)$ to $O$ is also feasibly admissible.}
\label{fig:phi_comuting_diagram}
\end{figure}

\begin{theorem}
\label{thm:ode-induced-transitions}
Let $c(t)$ be a solution of~\eqref{eq:ode} with initial presence set
\[
S_0=\phi(c(0)).
\]
Then the closure-building transition
\[
S_0 \longrightarrow \operatorname{clos}(S_0)
\]
is feasibly admissible.

Suppose, in addition, that there exist times $0\leq t_1<t_2$ such that
\[
\phi(c(t_1))=\operatorname{clos}(S_0)
\qquad\text{and}\qquad
\phi(c(t_2))=O,
\]
where $O$ is an organization. Then the transition
\[
\operatorname{clos}(S_0)\longrightarrow O
\]
is also feasibly admissible.
\end{theorem}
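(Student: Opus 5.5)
The plan is to prove the two assertions separately. In each case I will exhibit an explicit feasible flux that witnesses a single feasibly elementary admissible step, or a finite chain of such steps, as in Definition~\ref{def:feasible-adm}.

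\textbf{First assertion.} This is essentially Proposition~\ref{cor:canonical-closure} applied to $A=S_0$; I would still verify the elementary steps directly.

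1. Write $A_0=S_0$ and $A_{k+1}=A_k\cup\{\text{products of reactions whose reactants lie in }A_k\}$. This chain stabilises at $\operatorname{clos}(S_0)$ after finitely many steps.
2. For the step $A_k\to A_{k+1}$, let $\hat v$ be the flux with $\hat v_j=1$ on every reaction enabled by $A_k$ and $0$ otherwise. It is feasible with respect to $A_k$ by construction.
3. Here $J=S\setminus A_k$, because $A_k\subseteq A_{k+1}$. Every enabled reaction consumes only species of $A_k$. Hence for $i\notin A_k$ we have $(N\hat v)_i\ge 0$, with strict inequality exactly when $s_i$ is produced by some enabled reaction, i.e.\ exactly when $s_i\in A_{k+1}\setminus A_k$.
4. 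So $\operatorname{sign}((N\hat v)_i)=\delta_i$ on $J$. Proposition~\ref{prop:composition-admissibility} then gives admissibility of $S_0\to\operatorname{clos}(S_0)$.
5. If $S_0$ is already closed, the zero transition is witnessed by $\hat v=0$.

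\textbf{Second assertion.} I would use the time-integrated flux
\[
\hat v:=\int_{t_1}^{t_2} v(c(t))\,dt\in\mathbb R^m_{\ge0},
\qquad
N\hat v=c(t_2)-c(t_1).
\]
The key auxiliary fact is the invariance $\phi(c(t))\subseteq C:=\operatorname{clos}(S_0)$ for all $t\ge0$.

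Granting this fact, the argument runs as follows.
\begin{itemize}
\item \emph{Feasibility.} If $\hat v_j>0$, then $v_j(c(t))>0$ for some $t\in[t_1,t_2]$. By the reactant-availability condition, $\operatorname{React}(r_j)\subseteq\phi(c(t))\subseteq C$, so $\hat v$ is feasible with respect to $C$.
\item \emph{Inclusion $O\subseteq C$.} This follows from the invariance, since $O=\phi(c(t_2))$. Consequently $J=(C\setminus O)\cup(S\setminus C)$, and $\delta=-1$ on $C\setminus O$ while $\delta=0$ on $S\setminus C$.
\item \emph{Indices in $S\setminus C$.} Here $c_i(t_1)=c_i(t_2)=0$, so $(N\hat v)_i=0$.
\item \emph{Indices in $C\setminus O$.} Here $c_i(t_1)>0=c_i(t_2)$, so $(N\hat v)_i<0$.
\end{itemize}
Thus $(C,O)$ is feasibly elementary admissible, hence feasibly admissible. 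The hypothesis that $O$ is an organization is not actually needed for this sign computation.

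\textbf{Main obstacle: the invariance $\phi(c(t))\subseteq C$.} This is not stated among the recalled results, and Lemma~\ref{l1} only gives the opposite direction. I would prove it by forward invariance of the face $F=\{c\ge0: c_i=0 \text{ for } i\notin C\}$.
\begin{enumerate}
\item Take $c\in F$ and any $j$ with $v_j(c)>0$. Then $\operatorname{React}(r_j)\subseteq\phi(c)\subseteq C$. Since $C$ is closed, $r_j$ produces only species in $C$, and it consumes only species in $C$.
\item Therefore $(Nv(c))_i=0$ for all $i\notin C$ and all $c\in F$.
\item Now solve the reduced system in the coordinates indexed by $C$, with the remaining coordinates frozen at $0$ and the same initial data. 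By the previous step this function also solves~\eqref{eq:ode}.
\item Local Lipschitz continuity of $v$ gives uniqueness, so it coincides with $c(t)$. Hence $c(t)\in F$ for all $t$.
\end{enumerate}
The delicate point is that the reduced solution must remain in $\mathbb R^n_{\ge0}$, the domain of $v$. I would handle this by noting that $c(t)$ is assumed to be a solution in the nonnegative orthant, and by applying the uniqueness comparison on the common interval of existence.
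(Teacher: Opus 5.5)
Your proposal is correct and follows the paper's proof. The first part uses the closure chain together with Proposition~\ref{prop:composition-admissibility}, and the second uses the integrated flux $\hat v=\int_{t_1}^{t_2}v(c(t))\,dt$, checked for feasibility and sign pattern. You are more careful than the paper in two places: you actually prove the invariance $\phi(c(t))\subseteq\operatorname{clos}(S_0)$, which the paper only asserts, and you treat the indices outside $\operatorname{clos}(S_0)$, which the paper's sign check omits.

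The domain issue you flag is a real loose end, since the reduced system is not defined before you know its solution stays nonnegative. It is most easily avoided by applying Gronwall to $u(t)=\sum_{i\notin C}c_i(t)$, with $u(0)=0$. Let $\pi(c)$ denote $c$ with its coordinates outside $C$ set to zero. Your step 2 gives $(Nv(\pi(c)))_i=0$ for $i\notin C$, so local Lipschitz continuity yields $\dot u\le Ku$, hence $u\equiv 0$, and no reduced solution has to be constructed.
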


\begin{proof}
By Proposition~3.14, the closure construction can be decomposed into
feasibly elementary admissible transitions. Together with
Proposition~3.12, this implies that
\[
S_0 \longrightarrow \operatorname{clos}(S_0)
\]
is feasibly admissible.

For the second statement, assume that there exist
$t_1<t_2$ such that
\[
\phi(c(t_1))=\operatorname{clos}(S_0)
\qquad\text{and}\qquad
\phi(c(t_2))=O.
\]
Define the integrated reaction flux
\[
\widehat v :=
\int_{t_1}^{t_2} v(c(t))\,dt
\in\mathbb{R}_{\geq0}^{m}.
\]
Integration of~\eqref{eq:ode} over $[t_1,t_2]$ yields
\[
c(t_2)-c(t_1)=N\widehat v.
\]

Since $\operatorname{clos}(S_0)$ is closed, no species outside
$\operatorname{clos}(S_0)$ can be produced along this part of the
trajectory. Hence every reaction contributing positively to
$\widehat v$ has all its reactants contained in
$\operatorname{clos}(S_0)$. Therefore $\widehat v$ is a feasible flux
with respect to the source set $\operatorname{clos}(S_0)$.

For every species belonging to
$\operatorname{clos}(S_0)\setminus O$, we have
$c_i(t_1)>0$ and $c_i(t_2)=0$, and therefore
\[
(N\widehat v)_i
=
c_i(t_2)-c_i(t_1)<0.
\]
For every index
\[
i\in
J\bigl(\operatorname{clos}(S_0),O\bigr),
\]
the sign of $(N\widehat v)_i$ therefore agrees with the corresponding
component of
$\Delta(\operatorname{clos}(S_0),O)$.
No sign condition is imposed on species belonging to
$\operatorname{clos}(S_0)\cap O$, according to
Definition~3.9. Consequently,
\[
\operatorname{sign}\bigl((N\widehat v)_i\bigr)
=
\Delta\bigl(\operatorname{clos}(S_0),O\bigr)_i
\]
for all indices relevant to admissibility. Hence
\[
\operatorname{clos}(S_0)\longrightarrow O
\]
is feasibly elementary admissible, and therefore feasibly admissible.
\end{proof}
\begin{remark}
The additional finite-time realization assumption in
Theorem~\ref{thm:ode-induced-transitions} is distinct from asymptotic
persistence. Theorem~2.3 guarantees that the persistent species set of a
trajectory is an organization, but it does not imply that species outside
this organization reach zero concentration at a finite time. In numerical
or experimental applications, this distinction can naturally be handled
by a thresholded presence map such as the one used in Section~4.1.
\end{remark}

Thus, solutions of reaction--based ODE systems induce canonical sequences of
feasibly admissible transitions. In particular, the upward closure-building phase
is structurally determined, whereas the terminal phase may depend on the realized trajectory.


\section{Examples}

\subsection{Example for ODE systems: transition patterns in an HIV--immune system model (Wodarz--Nowak)}
\label{subsec:app-hiv-nowak-wodarz}

We illustrate the structural transition framework using the classical
Wodarz--Nowak HIV model, previously analyzed from a reaction--network
perspective in \cite{dittrich2007chemical}. The reaction network consists of
four species
\[
\mathcal S=\{x,y,w,z\},
\]
representing healthy target cells $x$, infected cells $y$, CTL precursors $w$,
and CTL effectors $z$.

The reaction rules are given by
\[
\begin{aligned}
\emptyset &\to x \\
x+y &\to 2y \\
x &\to \emptyset \\
y+z &\to z \\
y &\to \emptyset \\
x+y+w &\to x+y+2w \\
w &\to \emptyset \\
y+w &\to y+z \\
z &\to \emptyset .
\end{aligned}
\]

The parameters and initial conditions used in the simulations are listed in
Table~\ref{tab:hiv_params}.

\begin{table}[h]
\centering
\caption{Parameters and initial conditions for HIV model simulations}
\label{tab:hiv_params}
\begin{tabular}{llr}
\toprule
Parameter & Description & Value \\
\midrule
$\lambda$ & Production rate of healthy T cells & 10 \\
$d$ & Death rate of uninfected T cells & 0.01 \\
$\beta$ & Infection rate & 0.00001 \\
$a$ & Death rate of infected T cells & 0.5 \\
$p$ & CTL proliferation rate & 0.1 \\
$c$ & CTL killing rate & 0.01 \\
$b$ & CTL differentiation rate & 0.01 \\
$h$ & CTL precursor death rate & 0.01 \\
$q$ & CTL effector death rate & 0.1 \\
\midrule
$x(0)$ & Initial healthy cells & 1000 \\
$y(0)$ & Initial infected cells & 1 \\
$w(0)$ & Initial CTL precursors & 0 \\
$z(0)$ & Initial CTL effectors & 0 \\
\midrule
$t_{\text{inj}}$ & Injection time & 2000 \\
$w_{\text{weak}}$ & Weak injection amplitude & 0.1 \\
$w_{\text{strong}}$ & Strong injection amplitude & 10 \\
$\varepsilon$ & Presence threshold & $10^{-8}$ \\
\bottomrule
\end{tabular}
\end{table}

Starting from the initial presence set
\[
\phi_\varepsilon(c(0))=\{x,y\},
\]
the CTL precursor species $w$ is injected at time $t_{\mathrm{inj}}$
with two different amplitudes (weak versus strong). Presence sets are extracted
using the thresholded abstraction
\[
\phi_\varepsilon(c(t))
:=
\{\,s_i\in\mathcal S\mid c_i(t)>\varepsilon\,\},
\qquad
\varepsilon=10^{-8}.
\]

The resulting concentration dynamics and induced transition patterns are shown
in Figure~\ref{fig:hiv-graphs}.

\begin{figure}[h]
    \centering
 \includegraphics[trim=0 0 0 0, clip,width=1\textwidth]{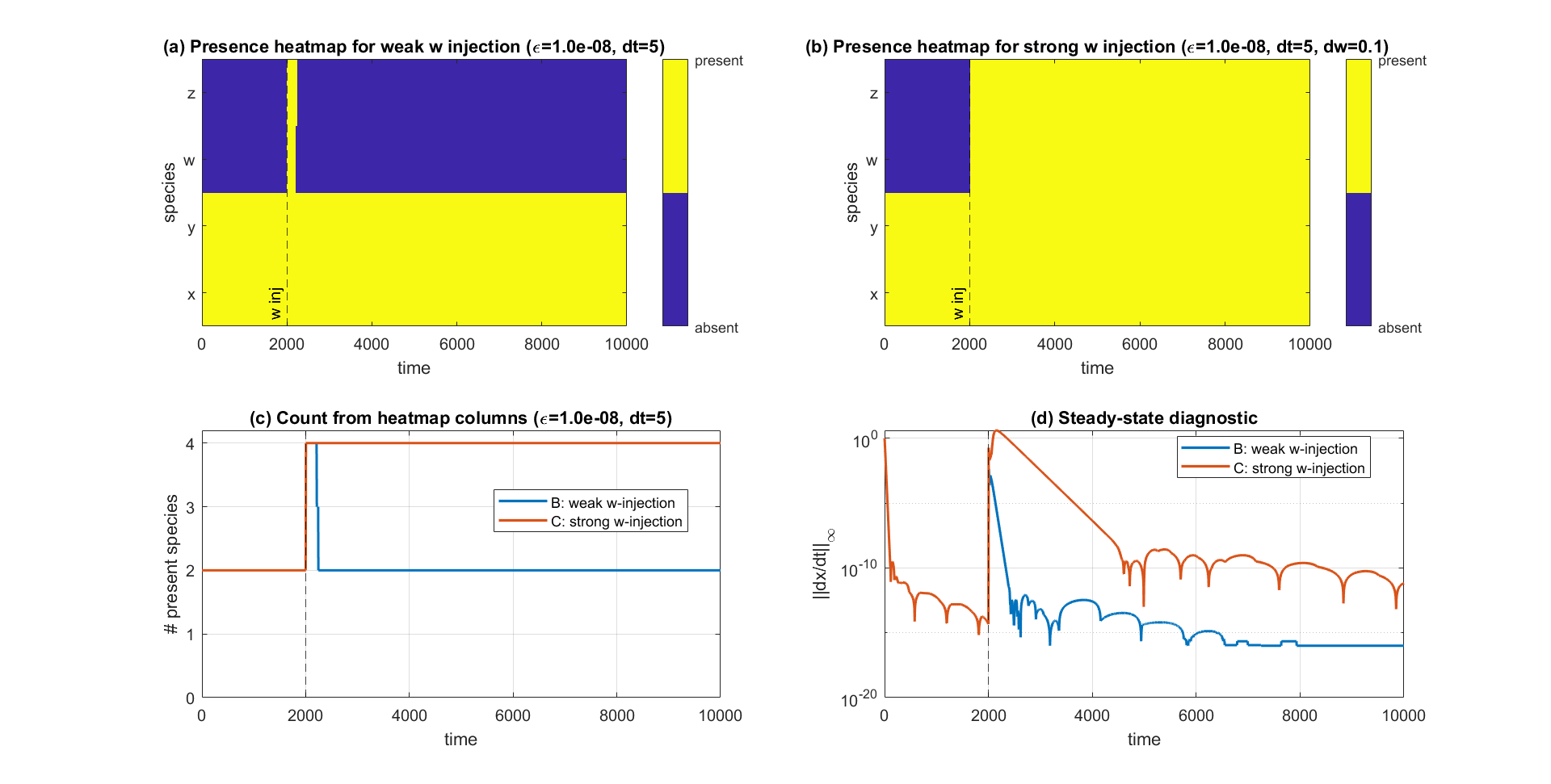}
   		\caption{Numerical simulations of the Wodarz--Nowak HIV model illustrating
ODE--induced set--valued transitions.
Shown are concentration heatmaps of the four species $x,y,w,z$ for two trajectories
starting from the same initial condition and the same initial presence set
$\phi_\varepsilon(c(0))=\{x,y\}$.
At time $t_{\mathrm{inj}}$, the CTL precursor species $w$ is injected with two different
amplitudes (weak vs.\ strong).
In both cases, the solution first induces the same closure-induced upward transition to the
closed set $\mathrm{clos}(\{x,y,w\})=\{x,y,w,z\}$.
For the strong injection, the trajectory converges to the terminal organization
$\{x,y,w,z\}$, whereas for the weak injection, a subsequent downward transition occurs
and the system returns to the organization $\{x,y\}$.
Presence sets are extracted using the thresholded abstraction
$\phi_\varepsilon(c(t))$ with $\varepsilon=10^{-8}$.}
		\label{fig:hiv-graphs}
\end{figure}

Both perturbations induce the same canonical upward transition
\[
\{x,y\}\to\mathrm{clos}(\{x,y,w\})=\{x,y,w,z\},
\]
in agreement with Proposition~\ref{cor:canonical-closure}. For the strong
injection, the trajectory converges to the terminal organization
\[
\{x,y,w,z\},
\]
whereas for the weak injection, species $z$ and later $w$ lose reaction support,
leading to the terminal organization
\[
\{x,y\}.
\]

The intermediate set $\{x,y,w\}$ is a non--organization and therefore transient.
The example illustrates the asymmetry established in
Theorem~\ref{thm:ode-induced-transitions}: closure-building transitions are structurally
canonical, whereas downward transitions and terminal organizations depend on the
realized trajectory.

The induced set--valued transition structure is summarized in
Figure~\ref{fig:hiv-tikz}.

\begin{figure}[t]
\centering
\scalebox{0.9}{
\begin{tikzpicture}[
  >=latex,
  node distance=10mm,
  setnode/.style={draw, rounded corners, align=center, font=\small, inner sep=4pt},
  up/.style={->, thick},
  down/.style={->, thick, dashed},
  lab/.style={font=\footnotesize, inner sep=1pt, fill=white}
]
\node[setnode] (S0) {organization $S_0=\{x,y\}$};
\node[setnode, above=of S0] (S1) {non-closed $S_1=\{x,y,w\}$};
\node[setnode, above=of S1] (S2) {organization $S_2=\{x,y,w,z\}=\mathrm{clos}(S_1)$};

\draw[up] (S0) -- node[lab, pos=0.5, anchor=east] {$T_{0\to1}$: inject $w$ at $t=2000$} (S1);
\draw[up] (S1) to[bend left=25] node[lab, pos=0.5, anchor=east] {$T_{1\to2}$: building closure} (S2);
\draw[down] (S2) to[bend left=25] node[lab, pos=0.5, anchor=west] {$T_{2\to1}$: depletion of $z$ (only trajectory B)} (S1);
\end{tikzpicture}
}
\caption{Set--valued transition structure induced by the HIV model simulations.
Starting from the initial presence set $\{x,y\}$, the system undergoes a canonical
upward transition to the closed set $\{x,y,w,z\}$.
Depending on the realized trajectory, the system either remains in the organization
$\{x,y,w,z\}$ or returns to $\{x,y\}$ through a downward transition associated
with loss of reaction support.}
\label{fig:hiv-tikz}
\end{figure}

The same structural transition patterns can also be identified in other reaction--based
models, including virus infection systems and reaction--diffusion dynamics
\cite{Handel2009,peter2021linking,peter2024cell}.
The framework is therefore not restricted to a specific dynamical formalism but
provides a general structural description of admissible changes between species subsets.

\subsection{Structural transitions and Markov processes}
\label{subsec:markov}

The transition framework introduced above is purely structural and independent of
kinetic parameters or probabilistic assumptions. Nevertheless, it naturally constrains
stochastic descriptions of reaction networks.

Let $\mathcal{S}$ denote the set of species subsets and
$\mathcal{T}\subseteq\mathcal{S}\times\mathcal{S}$ the set of admissible structural
transitions. This transition graph specifies which changes of species presence are
structurally feasible under the reaction network.

Stochastic reaction network models are commonly formulated as continuous--time
Markov processes on discrete state spaces
\cite{Norris1998,Anderson2011}. In reduced descriptions, the states may correspond
to subsets of species \cite{Gillespie1977,Gillespie2007}. Markovian descriptions
have also been introduced at the level of chemical organizations to study their
evolution, complexity, and resilience
\cite{VelozGonzalez2023}.

The perspective adopted here is different. Rather than postulating a stochastic
process between organizations, we first determine a kinetics-independent graph of
structurally admissible transitions between arbitrary species subsets. A Markov
process on this state space can subsequently be obtained by assigning transition
probabilities to admissible transitions in $\mathcal{T}$.

Importantly, the probabilistic dynamics do not introduce new transitions:
every stochastic transition must correspond to a structurally admissible transition.
Different stochastic models therefore correspond to different probabilistic weightings
of the same underlying transition graph. In particular, upward transitions leading to
closures remain structurally determined, whereas downward transitions may occur with different
probabilities depending on noise or environmental fluctuations
\cite{Anderson2015}.

Thus, admissible transitions provide a structural constraint layer underlying both
deterministic and stochastic reaction network dynamics.


The results of this section establish a unified structural framework for transitions
between species subsets in reaction--based dynamical systems. Solutions of ODE
systems induce canonical sequences of feasibly admissible transitions with a
distinguished upward closure--building phase and a terminal phase leading to
organizations. The HIV example illustrates these mechanisms in a biological setting,
while the Markov case demonstrates that the same structural constraints also govern
stochastic dynamics. Together, these results show that admissible transitions capture
system--independent structural constraints on transient behavior at the level of
species presence.
\section{Discussion}

\subsection{Related work}
\label{sec:related-work}

Reaction network analysis has been developed along several complementary directions,
including chemical reaction network theory (CRNT), chemical organization theory (COT),
and deterministic or stochastic dynamical models. The present work is situated at
the interface of these approaches and focuses on transient structural changes between
species subsets.

Classical CRNT relates network structure to qualitative dynamical properties such as
stability and uniqueness of equilibria under mass--action kinetics
\cite{Feinberg1979lectures,Feinberg1987deficiency}. While CRNT provides powerful
tools for steady--state analysis, it does not describe transient changes of species
presence or admissible transitions between species subsets.

Recent work has emphasized the usefulness of chemical reaction network
concepts in mathematical epidemiology, including structural approaches to
epidemic models, persistence, boundary behavior, siphons, and autocatalytic
structures
\cite{Avram2024EpidemiologyCRN,Avram2026Siphons}.
These developments illustrate how reaction-network structure can reveal
qualitative properties of biologically motivated dynamical systems. The present
work complements this perspective by focusing specifically on structurally
admissible transient changes between species-presence sets.

Chemical organization theory characterizes structurally stable subsets of species
through the notions of closure and self--maintenance
\cite{dittrich2007chemical,peter2026chemical}. Organizations were later linked to
persistence properties of reaction--based dynamical systems
\cite{peter2021linking}. Earlier work within COT also considered the evolution of
chemical systems as movements through the space of organizations, distinguishing
upward, downward, and sideward organizational changes
\cite{Matsumaru2006}. These transitions, however, are formulated at the level of
chemical organizations. In contrast, the present framework defines admissible
transitions between arbitrary species subsets of a fixed reaction network and
characterizes their feasibility directly in terms of stoichiometry and reaction
support.

Other structural approaches include elementary flux mode analysis and Petri net
representations. Elementary modes characterize minimal steady--state flux patterns
\cite{schuster1994elementary}, while Petri nets provide tools for analyzing reachability
and invariants in biochemical systems
\cite{Feinberg1979lectures,reisig2013understanding}. These approaches focus either on
steady states or on concentration-based state spaces rather than on transitions
between species presence sets.

Reaction networks are also commonly studied using deterministic ODE systems and
stochastic Markov models. Such models describe trajectories and probabilistic
dynamics in detail. Moreover, previous work has considered organizational evolution,
perturbation and resilience, and stochastic transitions between chemical organizations
\cite{Matsumaru2006,VelozResilience2022,VelozGonzalez2023}.

The framework introduced here is complementary to these approaches. Its central
object is not a transition between already identified organizations, nor a
probabilistically weighted transition induced by a particular dynamical model.
Instead, admissibility is defined directly for arbitrary pairs of species subsets
and determined from reaction-network structure through stoichiometric sign constraints
and feasible reaction support. This provides a structural layer between network
topology and particular deterministic or stochastic realizations.

\subsection{Discussion}
\label{sec:discussion}

We introduced a structural framework for analyzing transient behavior in reaction
networks through admissible transitions between species subsets. A key aspect of the
framework is the separation between structural constraints imposed by network topology
and the particular dynamical realization induced by kinetic laws or stochastic effects.
Our results reveal a characteristic asymmetry: closure-building transitions are
structurally determined, whereas downward transitions leading to terminal
organizations are generally non-unique and path-dependent.

Chemical organization theory characterizes structurally stable species sets associated
with asymptotic persistence. In contrast, the present framework places organizations
within a broader transition landscape, where they appear as terminal states connected
through structurally constrained pathways. The HIV example in
Section~\ref{subsec:app-hiv-nowak-wodarz} illustrates that the same closed set
$\{x, y, w, z\}$ can lead either to viral control or immune collapse depending on the
realized admissible downward transition. This demonstrates that transient structural
states may influence long-term outcomes in biological systems.

The framework complements existing approaches such as chemical reaction network
theory, chemical organization theory, elementary flux mode analysis, and Petri net
approaches by focusing explicitly on admissible changes between species presence
sets rather than only on asymptotic states or concentration trajectories
\cite{peter2021linking}. In this way, admissible transitions provide a qualitative
description of transient structural behavior that is independent of detailed kinetic
parameters.

The framework also suggests several practical and computational perspectives.
Admissibility testing can be formulated as a feasibility problem for reaction fluxes,
while reachability analysis requires computing closures and reachable organizations.
Constructing complete transition graphs may become computationally demanding due
to the potentially large number of organizations
\cite{peter2023computing,ruth2024revealing}. Nevertheless, such methods may
enable automated exploration of structural pathways in large reaction networks, for example in ecology~\cite{PeterIbrahim2026Dryland} or neurology~\cite{peter2024intuitive}.

Several extensions remain possible. Future work may include spatial reaction--diffusion
systems, adaptive or time-dependent reaction networks, and quantitative refinements
that incorporate flux magnitudes or transition probabilities. Experimental validation
in synthetic or biochemical systems could further clarify the relevance of admissible
transition pathways for biological dynamics.

The present framework is intentionally qualitative and focuses on species presence
rather than concentration values or time scales. Consequently, it does not replace
detailed dynamical analysis, but instead provides complementary structural information
about transient reachability and admissible system evolution.

Taken together, the proposed framework extends structural reaction network analysis
from static organizations to transient structural dynamics and provides a
kinetics-independent perspective on admissible pathways in biological and biochemical systems.

\section{Conclusion}
Admissible transitions provide a structural description of how species-presence sets
may change in reaction networks independently of detailed kinetic laws. The framework
connects continuous reaction-based dynamics with discrete transition sequences and
reveals an intrinsic asymmetry between canonical closure-building transitions and
generally path-dependent downward transitions. The HIV--immune system example shows
how distinct admissible pathways can be associated with different terminal
organizations and long-term biological outcomes. Thus, the proposed framework extends
structural reaction-network analysis from persistent configurations to transient
structural dynamics and offers a basis for future computational studies of
reachability and organization formation in biological systems.


%

\bibliographystyle{ws-jbs}
\bibliography{sample}

\end{document}